\documentclass[11pt,a4paper,leqno]{article}
\usepackage[nocolor]{mymacros}
\usepackage{cite}

\title{Log-Sobolev inequality for near critical Ising models}

\author{Roland Bauerschmidt\footnote{University of Cambridge, Statistical Laboratory, DPMMS. E-mail: {\tt rb812@cam.ac.uk}.}
\and
Benoit Dagallier\footnote{University of Cambridge, Statistical Laboratory, DPMMS. E-mail: {\tt bd444@cam.ac.uk}.}}

\date{\vspace*{-2em}} 

\begin{document}
\maketitle
\begin{abstract}
For general ferromagnetic Ising models whose coupling matrix has bounded spectral radius,
we show that the log-Sobolev constant satisfies a simple bound expressed only
in terms of the susceptibility of the model.
This bound implies very generally that the log-Sobolev constant is uniform in the
system size up to the critical point (including on lattices), without using any mixing conditions.
Moreover, if the susceptibility satisfies the mean-field bound as the
critical point is approached, our bound implies that the log-Sobolev constant
depends polynomially on the distance to the critical point and on the volume. In particular, this
applies to the Ising model on subsets of $\mathbb{Z}^d$ when $d>4$.

The proof uses a general criterion for the log-Sobolev inequality in terms of the
Polchinski (renormalisation group) equation, a recently proved remarkable
correlation inequality for Ising models with general external fields, the
Perron--Frobenius theorem, and the log-Sobolev inequality for product Bernoulli
measures.
\end{abstract}

\section{Main result and proof}

\subsection{Main result}

Let $\Lambda$ be a finite set, and denote by $(\cdot,\cdot)$ the standard inner product on $\R^\Lambda$.
For $A= (A_{xy})_{x,y\in\Lambda}$ a symmetric matrix with $A_{xy}\leq 0$ if $x \neq y$, 
the ferromagnetic Ising model with coupling matrix $A$, inverse temperature $\beta\geq 0$, and external field $h\in\R^\Lambda$ has the expectation
\begin{equation} \label{e:def-Ising}
  \E_{\mu_{\beta,h}} F
  \propto
  \sum_{\sigma \in \{\pm 1\}^\Lambda}e^{-\frac{\beta}{2} (\sigma, A\sigma)} e^{(h,\sigma)} F(\sigma).
\end{equation}
We assume that $A$ is positive definite and has spectral radius $\|A\| \leq 1$.
The positive definiteness can always be imposed by adding a diagonal matrix (without changing the corresponding Ising model)
and $\|A\|\leq 1$ corresponds to a normalisation of the inverse temperature parameter.

For $x\in\Lambda$, 
let $\sigma^x \in \{\pm 1\}^\Lambda$ be the spin configuration obtained from $\sigma \in \{\pm 1\}^{\Lambda}$ by flipping the sign of $\sigma_x$.
The standard (Glauber) Dirichlet form associated with the Ising model \eqref{e:def-Ising} is
\begin{equation} \label{e:Dirichlet}
  D_{\mu_{\beta,h}} (F)  = \frac12 \sum_{x\in\Lambda}\E_{\mu_{\beta,h}} \Big[\big(F(\sigma)-F(\sigma^x)\big)^2\Big].
\end{equation}
The log-Sobolev constant $\gamma_{\beta,h}$ is the largest constant such that, for all nonnegative $F$,
\begin{equation}
  \ent_{\mu_{\beta,h}}(F) \leq \frac{2}{\gamma_{\beta,h}} D_{\mu_{\beta,h}}(\sqrt{F}),
\end{equation}
where $\ent_{\mu_{\beta,h}}(F) = \E_{\mu_{\beta,h}}\Phi(F)-\Phi(\E_{\mu_{\beta,h}} F)$ with $\Phi(x)=x\log x$
is the relative entropy.
Our main result is stated in terms of the $0$-field susceptibility of the Ising model, given by
\begin{equation} \label{e:suscept-def}
  \chi_\beta = \sup_{x\in\Lambda} \sum_{y\in\Lambda} \E_{\beta,0}(\sigma_x\sigma_y).
\end{equation}

\begin{theorem} \label{thm:ising}
  The log-Sobolev constant of \eqref{e:def-Ising} satisfies
  \begin{equation} \label{e:LSbound}
    \frac{1}{\gamma_{\beta,h}} \leq \frac{1}{2}+\int_0^\beta e^{2\int_0^t \chi_s\, ds}\, dt.
  \end{equation}
\end{theorem}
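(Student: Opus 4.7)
The plan is to apply the Polchinski renormalisation-group criterion for the log-Sobolev inequality to an interpolation that runs from a product Bernoulli measure at $t=0$ to the target measure $\mu_{\beta,h}$ at $t=\beta$. The additive $\tfrac12$ in \eqref{e:LSbound} will come from the base case (product Bernoulli), while the integral term will come from integrating a covariance-operator bound along the flow.

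The first step is a Hubbard--Stratonovich decomposition: since $A$ is positive definite, one can write
\[
  e^{-\frac{\beta}{2}(\sigma,A\sigma)} \propto \int e^{-\frac12 (\phi,(\beta A)^{-1}\phi)+(\phi,\sigma)}\, d\phi,
\]
so that $\mu_{\beta,h}$ is the $\sigma$-marginal of a joint Gaussian--Ising measure. Running the Gaussian covariance as $tA$ for $t\in[0,\beta]$ produces a family of effective measures $\mu_t$ on $\{\pm 1\}^\Lambda$, each a mixture of product Bernoulli measures indexed by a Gaussian sample $\phi$. The measure $\mu_0$ is a product Bernoulli in a shifted external field, and tensorisation of the single-spin Bernoulli log-Sobolev inequality gives $1/\gamma_0 \leq 1/2$. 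The Polchinski criterion then yields a bound of the form
\[
  \frac{1}{\gamma_{\beta,h}} \leq \frac{1}{\gamma_0} + \int_0^\beta \|C_t\|\, dt,
\]
where $C_t$ is the conditional spin--spin covariance matrix at time $t$, uniformly in the auxiliary Gaussian field. It therefore remains to establish $\|C_t\| \leq e^{2\int_0^t \chi_s\, ds}$.

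This is the step I expect to be the main obstacle. It rests on two ingredients. First, the recently proved correlation inequality for Ising models in arbitrary external fields ensures that $C_t$ has nonnegative entries, even in the presence of the random Gaussian field. By the Perron--Frobenius theorem applied to the symmetric nonnegative matrix $C_t$, the operator norm coincides with the maximum row sum: $\|C_t\| = \sup_x \sum_y (C_t)_{x,y}$. Second, one derives a differential inequality for this row sum along the Polchinski flow. Differentiating in $t$ produces a truncated four-point correlation, which after rewriting as a sum of covariances and using entrywise nonnegativity is dominated by $2\chi_t\,\|C_t\|$: the row sums of $C_t$ in any external field are controlled by the zero-field susceptibility $\chi_t$ via ferromagnetic correlation inequalities. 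Gronwall's lemma applied to $\tfrac{d}{dt}\|C_t\| \leq 2\chi_t\|C_t\|$ with $\|C_0\|\leq 1$ yields the exponential factor, and combining with $1/\gamma_0 \leq 1/2$ produces \eqref{e:LSbound}. The delicate part is the uniformity in $\phi$ of the row-sum estimate: it is precisely the new correlation inequality, valid for non-uniform (even sign-varying) external fields, that allows one to reduce the analysis to the zero-field susceptibility.
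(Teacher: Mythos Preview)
Your overall architecture is right---a product-Bernoulli base case contributing $\tfrac12$, a Polchinski flow for the Gaussian part, and the new correlation inequality to control the covariance uniformly in the auxiliary field---but the way you assemble these pieces does not match the actual mechanism, and the step you flag as ``the main obstacle'' is not the one that appears in the proof.

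First, the form of the Polchinski criterion you quote, $\tfrac{1}{\gamma_{\beta,h}} \le \tfrac{1}{\gamma_0} + \int_0^\beta \|C_t\|\,dt$, is not what the criterion provides. The criterion from \cite{MR4303014} is a Hessian condition: if $\dot C_t\,\He V_t\,\dot C_t - \tfrac12 \ddot C_t \ge \dot\lambda_t\,\dot C_t$ for all $t$, then $\tfrac{1}{\gamma} \le \|\dot C_0\|\int_0^\beta e^{-2\int_0^t \dot\lambda_s\,ds}\,dt$. The exponential $e^{2\int_0^t \chi_s\,ds}$ is thus produced \emph{by the criterion itself}, with $\dot\lambda_t = -\chi_t$, not by a Gronwall argument you run afterwards on the covariance norm. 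Correspondingly, no differential inequality for $\|C_t\|$ and no four-point-function estimate is needed; the only input is the pointwise bound $\|\Sigma_t(f)\| \le \chi_t$ on the spin--spin covariance of $\mu_{t,f}$, uniformly in the external field $f$.

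Second, you have the role of the new correlation inequality inverted. Nonnegativity of the entries of $\Sigma_t(f)$ is just FKG; the new inequality from \cite{2107.09243} is the entrywise monotonicity $\Sigma_t(f)_{xy} \le \Sigma_t(0)_{xy}$ for \emph{arbitrary} (possibly sign-varying) $f$. These are combined as follows: Perron--Frobenius gives a nonnegative top eigenvector $Y_0$ of the nonnegative matrix $\Sigma_t(f)$, so $\|\Sigma_t(f)\| = Y_0^T \Sigma_t(f) Y_0 \le Y_0^T \Sigma_t(0) Y_0 \le \|\Sigma_t(0)\| \le \chi_t$. (Note also that Perron--Frobenius does \emph{not} give equality of the operator norm and the maximum row sum, only the inequality $\|\Sigma\| \le \max_x \sum_y \Sigma_{xy}$; the equality you claim is false in general.) With $\|\Sigma_t(f)\|\le\chi_t$ in hand, the Hessian computation $\He V_t = C_t^{-1} - C_t^{-1}\Sigma_t C_t^{-1}$ gives the required lower bound with $\dot\lambda_t=-\chi_t$ directly, and after transferring the continuous LSI for $\nu_{0,\beta}$ back to the discrete Dirichlet form (an $\alpha^2$ factor cancels against $\|\dot C_0\|$), one obtains \eqref{e:LSbound}. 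The Gronwall step you anticipate simply does not occur.
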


The Dirichlet form is associated with a Markov process (Glauber dynamics), see \cite{MR1746301} for background.
The standard Dirichlet form \eqref{e:Dirichlet} corresponds to a convenient choice of transition rates,
but other common choices of rates (such as heat-bath and\ Metropolis dynamics) are equivalent when the rates are bounded, as is the
case for the standard Ising model on $\Lambda \subset \Z^d$, also see \cite{MR1746301}.
The log-Sobolev inequality can be equivalently formulated in terms of hypercontractivity of the Glauber semigroup
and has many well-known implications.
In particular, the inverse log-Sobolev constant bounds the relaxation rate to equilibrium in the sense of relative entropy, i.e.,
if $F_t\, \mu_{\beta,\sigma}$ denotes the law of the dynamics at time $t\geq 0$, then
  \begin{equation}
  \ent_{\mu_{\beta,h}}(F_t)
  \leq
  e^{-2\gamma_{\beta,h}t} \ent_{\mu_{\beta,h}}(F_0)
  .
  \label{e:relaxation}
  \end{equation}
The inverse log-Sobolev constant can also be used to obtain bounds on other notions of convergence to equilibrium in finite and infinite volume, 
see the references
\cite{MR1746301,MR1971582,MR1490046,MR3020173} for introductions to the general implications.

Since $\chi_s$ is increasing in $s$ (by the second Griffiths inequality),
Theorem~\ref{thm:ising} shows in particular that
whenever $\chi_\beta$ is bounded,
the log-Sobolev constant is bounded below:
\begin{equation}
  \frac{1}{\gamma_{\beta,h}} \leq \frac12+ \beta e^{2 \beta \chi_\beta}.
\end{equation}
This shows that the log-Sobolev inequality holds uniformly in the volume in the entire high temperature phase of the Ising model,
e.g., on $\Lambda \uparrow \Z^d$ but equally in a much more general setting. 
For lattices $\Lambda$, uniformity in the volume was recently proved in \cite{2107.09243} by establishing the strong spatial mixing property, see
\cite{MR1746301}. Our more general criterion does not rely on geometry or mixing conditions.
The new correlation inequality from \cite{2107.09243} is also important in our proof.

But even more interestingly, the bound \eqref{e:LSbound} shows that if the susceptibility satisfies the mean-field bound
then the log-Sobolev constant is polynomial (in the distance to the critical point and in the volume).
This is summarised in the following corollary, whose proof simply consists of carrying out the integrals in \eqref{e:LSbound}
using the assumed bound on $\chi_\beta$.

\begin{corollary}
  Let $D>1/2$ and assume that the mean-field bound $\chi_\beta \leq D/(\beta_c-\beta)$ for all $\beta<\beta_c$ holds. Then
  \begin{equation}
    \frac{1}{\gamma_{\beta,h}} \leq
    \frac12+ \frac{\beta_c}{2D-1} \big[(1-\beta/\beta_c)^{1-2D} -1\big]
    \underset{\beta\rightarrow\beta_c}{\sim} \frac{\beta_c}{2D-1} (1-\beta/\beta_c)^{1-2D}.
  \end{equation}
  Similarly, if the finite-volume mean-field bound $\chi_\beta \leq D/(\beta_c-\beta+L^{-2})$ for all $\beta<\beta_c$ holds, then
 \begin{equation}
  \frac{1}{\gamma_{\beta_c,h}} \leq
  \frac12 + \frac{\beta_c+L^{-2}}{2D-1} \big[(L^2\beta_c+1)^{2D-1}-1\big]
  \underset{L \to\infty}{\sim}
  \frac{\beta_c}{2D-1} (L^2\beta_c+1)^{2D-1}.
\end{equation}
\end{corollary}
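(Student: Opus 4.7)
The strategy, as the authors indicate, is to simply substitute the assumed mean-field bounds on $\chi_s$ into \eqref{e:LSbound} and compute the nested integrals in closed form. Since everything in sight is monotone in $s$ and $t$, there are no inequalities to optimise or sign changes to worry about; this is a purely mechanical calculation, with the role of the hypothesis $D > 1/2$ being to keep $2D-1 > 0$ so that all antiderivatives are power functions rather than logarithms.

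First I would handle the infinite-volume bound. Starting from $\chi_s \leq D/(\beta_c - s)$, the inner integral evaluates to
\begin{equation*}
  \int_0^t \chi_s\, ds \leq -D\log\!\big(1-t/\beta_c\big),
  \qquad
  \text{so that}
  \qquad
  e^{2\int_0^t \chi_s\, ds} \leq (1-t/\beta_c)^{-2D}.
\end{equation*}
Plugging this into \eqref{e:LSbound} leaves an elementary outer integral of the form $\int_0^\beta (1-t/\beta_c)^{-2D}\, dt$, which evaluates (using $2D \neq 1$) to $\frac{\beta_c}{2D-1}\big[(1-\beta/\beta_c)^{1-2D} - 1\big]$, giving the stated bound. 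The asymptotic as $\beta \uparrow \beta_c$ is then immediate because the divergent term $(1-\beta/\beta_c)^{1-2D}$ dominates the $-1$ and the additive $1/2$.

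The finite-volume statement at $\beta = \beta_c$ is entirely analogous after a shift: from $\chi_s \leq D/(\beta_c - s + L^{-2})$ one obtains $e^{2\int_0^t \chi_s\, ds} \leq \big((\beta_c+L^{-2})/(\beta_c - t + L^{-2})\big)^{2D}$, and the outer integral over $t\in[0,\beta_c]$, via the substitution $u = \beta_c - t + L^{-2}$, reduces to $\int_{L^{-2}}^{\beta_c+L^{-2}} u^{-2D}\, du$. Collecting constants and using $L^{2(2D-1)} = (L^2)^{2D-1}$ gives the claimed expression $\frac{\beta_c + L^{-2}}{2D-1}\big[(L^2\beta_c + 1)^{2D-1} - 1\big]$, and the asymptotic as $L\to\infty$ follows by keeping only the leading term.

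I do not anticipate any real obstacle: the work is bookkeeping with power functions, and the only point requiring a moment's care is the algebraic rewriting of $\beta_c^{2D}(\beta_c-\beta)^{1-2D}$ as $\beta_c (1-\beta/\beta_c)^{1-2D}$ (and the analogous manipulation in the $L^{-2}$ case) to match the form stated in the corollary.
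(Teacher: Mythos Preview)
Your proposal is correct and is exactly what the paper does: it states that the proof ``simply consists of carrying out the integrals in \eqref{e:LSbound} using the assumed bound on $\chi_\beta$'' and gives no further details. Your computations of the inner and outer integrals, including the algebraic rewriting to reach the stated forms, are accurate.
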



The mean-field bound holds for Ising models on $\Z^d$ in $d\geq 5$, see \cite{MR678000,MR720344}, and
the finite-volume mean-field bound similarly holds for the Ising model on a hypercube of side length $L$
(with free boundary conditions) in $d\geq 5$, and with possibly a different exponent for other boundary conditions, see \cite{MR4341075} for discussion of this. 
The exponents above are not the expected optimal ones when $\Lambda \uparrow \Z^d$, which would be $-1$ respectively $+1$ in $d\geq 5$.
We remark though that in the actual mean-field model (the Curie--Weiss model), the mean-field bound holds
with $D=1$ (see, e.g., \cite[Chapter~1]{MR3969983}) and one can thus obtain the optimal exponent from \eqref{e:LSbound}.
However, for the Curie--Weiss model,
also many other methods give sharp results, see, e.g., \cite{MR1182416,MR1746301},
and the more recent spectral conditions \cite{MR3926125,MR4408509,2106.04105}.

For lattices, by contrast,
a polynomial bound on the spectral gap for the Glauber dynamics of the Ising model at the critical temperature has previously only been proved
in two dimensions \cite{MR2945623}.
The proof of this bound relies on the RSW estimates \cite{MR2839298} as input.
Similar estimates have also been obtained for two-dimensional $q$-state Potts models with $q<4$, see \cite{MR3794520}.
The approach of the critical point for these models appears to remain open.
Other polynomial bounds for near critical spin models were restricted to simpler geometries
such as trees \cite{MR2585995} or hierarchical lattices \cite{MR4061408}.

For completeness, we also comment briefly on the low-temperature phase,
in which our technique to prove Theorem~\ref{thm:ising} does not seem directly useful.
In this region, the Glauber dynamics equilibrates at a rate that typically vanishes with the size of $\Lambda$ (assuming the external field $h$ is $0$). 
For the ferromagnetic Ising model in a hypercube $\Lambda \subset\Z^d$ of side length $L$, 
boundary conditions play a crucial role, 
and their influence is not captured by the zero-field susceptibility~\eqref{e:suscept-def}
(zero field corresponds to free boundary conditions). 
For instance, for free boundary conditions, it is known that the spectral gap 
(which is always larger than the log-Sobolev constant $\gamma_{\beta,h}$) 
decays like $e^{-c(\beta)L^{d-1}(1+o_L(1))}$ when $\beta>\beta_c$;
the constant $c(\beta)$ is explicit in dimension two~\cite{MR1746301} and there are more general bounds in dimension three and higher~\cite{zbMATH02123135}. 
Conversely, if the boundary condition is all $+$, there is a long-standing conjecture, 
the so-called Lifshitz law, 
that convergence to equilibrium from a worst-case initial condition should typically take a time of order $L^2$, 
up to logarithmic corrections.
Arguments supporting that bound are given in~\cite{MR1927919}, but 
so far the bound has only been established at $0$ temperature in all dimensions $d\geq 2$, 
see~\cite{zbMATH05906455} in dimension $d=2,3$ and~\cite{zbMATH06145994} in higher dimension. 
For $\beta\in(\beta_c,\infty)$, the currently best known results are in dimension two, 
where the rate of relaxation is known to be going to $0$ at least as $L^{-c'(\beta)\log L}$ for some $c'(\beta)>0$, 
see~\cite{MR3017041} and references therein. 
Similar bounds can also be obtained with periodic boundary conditions instead of all $+$, 
but for well chosen initialisations of the dynamics:  
bounds on the mixing time of order $L^{-1}$ in dimension $d=2$ and $L^{-c\log ^{d-1}L}$ have recently been established in~\cite{2106.11296}.

In \cite{lsiphi4}, we derive a variant of Theorem~\ref{thm:ising} for $\varphi^4$ models and use it to prove that the
continuum $\varphi^4_2$ and $\varphi^4_3$ measures satisfy log-Sobolev inequalities uniformly in the regularisation (and volume in the uniqueness phase).

\begin{remark}
  The proof only uses that $\chi_\beta$ defined in \eqref{e:suscept-def}
  provides an upper bound on the spectral radius of
  the two-point function $(\E_{\mu_{\beta,0}}(\sigma_x\sigma_y))_{x,y\in\Lambda}$.
  Thus replacing $\chi_\beta$  by the spectral radius of the two-point function gives a slightly more general statement.
\end{remark}

\subsection{Proof of Theorem~\ref{thm:ising}}

The strategy of the proof is to decompose $\mu_{\beta,h}$ into two measures: 
an infinite-temperature Ising part, for which the log-Sobolev inequality is known; 
and a continuous part for which the log-Sobolev inequality is established via the criterion 
based on the Polchinski (renormalisation group) equation from \cite[Theorem 2.5]{MR4303014}.
The first part is similar to the one in \cite{MR3926125} where the same kind of decomposition was performed.
In \cite{MR3926125}, the second part was treated
with the Bakry--Emery criterion, leading to the condition $\beta < 1$ (but without assuming the coupling matrix $A$ is ferromagnetic).
In contrast, our use of the more powerful Polchinski equation criterion does not require an upper bound on $\beta$.
To verify the assumptions of the criterion we exploit that $A$ is ferromagnetic by using the FKG inequality and the new correlation inequality from \cite{2107.09243}.

\begin{proof}[Proof of Theorem~\ref{thm:ising}]
Fix $\alpha>\beta$ and define for $t\in [0,\beta]$:
\begin{equation}
  C_t = (tA+(\alpha-t))^{-1}.
\end{equation}
The covariances $C_t$ are positive definite, increasing as quadratic forms in $t\in [0,\beta]$, and satisfy
\begin{equation}
  \dot C_t = (1-A)C_t^2 ,
  \qquad
  \ddot C_t 
  = 2(1-A)C_t\dot C_t.
\end{equation}
The Ising measure $\mu_{\beta,h}$ can then be written as (the term $\alpha-t=\alpha-\beta$ is irrelevant since $\sigma_x^2=1$)
\begin{equation}
  \E_{\mu_{\beta,h}}F \propto
  \sum_{\sigma \in \{\pm 1\}^\Lambda}e^{-\frac12 (\sigma, C_\beta^{-1}\sigma)} e^{(h,\sigma)}F(\sigma)
  \propto
  \sum_{\sigma \in \{\pm 1\}^\Lambda}e^{-\frac{\beta}{2} (\sigma, A\sigma)} e^{(h,\sigma)} F(\sigma).
\end{equation}
For $t\in [0,\beta)$ define the renormalised potential for $\varphi \in \R^\Lambda$ by
\begin{align} \label{e:Vt-Ising}
  V_t(\varphi)
  &= -\log \sum_{\sigma\in \{\pm 1\}^\Lambda} e^{-\frac12 (\sigma-\varphi,C_t^{-1}(\sigma-\varphi))} e^{(h,\sigma)}
    \nnb
  &= \frac12 (\varphi,C_t^{-1}\varphi) -\log \sum_{\sigma\in\{\pm 1\}^\Lambda} e^{-\frac12 (\sigma,C_t^{-1}\sigma)} e^{(h+C_t^{-1}\varphi,\sigma)},
\end{align}
and the renormalised measure as the probability measure on $\R^\Lambda$ given by
\begin{equation} \label{e:nut-Ising}
  \nu_{t,\beta}(d\varphi) \propto e^{-\frac12 (\varphi,(C_\beta - C_t)^{-1}\varphi)} e^{-V_t(\varphi)} \, d\varphi.
\end{equation}
Both $V_t$ and $\nu_{t,\beta}$ also depend on $h$ (not denoted explicitly).
For any $t\in(0,\beta]$ then
\begin{equation}
  e^{-V_t(\varphi)} \propto \int_{\R^\Lambda} e^{-\frac12 (\zeta,(C_t-C_0)^{-1}\zeta)} e^{-V_0(\varphi+\zeta)} \, d\zeta,
\end{equation}
and the Ising measure $\mu_{\beta,h}$ can be decomposed as follows for any $t\in [0,\beta)$ 
(thus including $t=0$):
\begin{equation} \label{e:measure-decomp}
  \E_{\mu_{\beta,h}} F = \E_{\nu_{t,\beta}} \E_{\mu_{t,h+C_t^{-1}\varphi}} F.
\end{equation}
The last two equations follow from the following convolution identity for Gaussian densities, 
valid for all $0 \leq t <s \leq \beta$ and $\sigma \in \R^\Lambda$:
\begin{equation} \label{e:convolv}
  e^{-\frac12 (\sigma, C_s^{-1}\sigma)}
  \propto
  \int_{\R^\Lambda} e^{-\frac12 (\varphi,(C_s-C_t)^{-1}\varphi)} e^{-\frac12 ((\sigma-\varphi),C_t^{-1}(\sigma-\varphi))} \, d\varphi.
\end{equation}

Let $F:\{-1,1\}^{\Lambda}\rightarrow \R_+$ be fixed. 
The relative entropy can be decomposed using \eqref{e:measure-decomp}:
\begin{equation} \label{e:ent-decomp}
  \ent_{\mu_{\beta,h}}(F) = \E_{\nu_{t,\beta}} (\ent_{\mu_{t,h+C_t^{-1}\varphi}} (F(\sigma))) + \ent_{\nu_{t,\beta}}(G(\varphi)),
\end{equation}
where
\begin{equation}
  G(\varphi) = \E_{\mu_{t,h+C_t^{-1}\varphi}} F(\sigma).
\end{equation}
In fact, we only use $t=0$ in this identity.
Then $C_0^{-1}=\alpha$ and the measure $\mu_{0,h+C_0^{-1}\varphi}$ is the infinite temperature (product) Ising model with external field $h+\alpha\varphi$. 
It thus satisfies a log-Sobolev inequality uniformly in $h$ and $\varphi$ with log-Sobolev constant $2/\gamma=1$,
see \cite{MR1849347,MR1490046,MR1845806}.
Therefore
\begin{equation}
  \E_{\nu_{0,\beta}} (\ent_{\mu_{0,h+\alpha\varphi}}(F(\sigma)))
  \leq
  \E_{\nu_{0,\beta}}D_{\mu_{0,h+\alpha\varphi}}(\sqrt{F})
  =
  D_{\mu_{\beta,h}}(\sqrt{F}).\label{eq_LSI_product}
\end{equation}
For the second term in \eqref{e:ent-decomp}, we show that
$\nu_{0,\beta}$ satisfies a log-Sobolev inequality (with constant denoted by $\gamma$) by using the Polchinski equation criterion from \cite[Theorem~2.5]{MR4303014}. 
This criterion states that, for a family $(\dot\lambda_t)_{t\in[0,\beta]}$ of real numbers
and increasing covariances $C_t$ as above:
\begin{equation} 
\forall t\in [0,\beta],
\qquad
\dot C_t\He V_t(\varphi)\dot C_t -\frac12 \ddot C_t\geq \dot\lambda_t \dot C_t
\end{equation}
implies
\begin{equation} \label{e:criterion}
  \frac{1}{\gamma} \leq \|\dot C_0\| \int_0^\beta e^{-2\int_0^t\dot \lambda_s \, ds}\, dt.
\end{equation}
To be precise, in \cite[Theorem~2.5]{MR4303014}, the covariances $C_t$ are parametrised by $t\in [0,\infty]$ rather than $t\in [0,\beta]$, and start at $C_0=0$.
This difference simply corresponds to the change of variable from $t$ to $\beta(1-e^{-t})$ in the definition of $C_t$, 
and to replacing $C_t$ by $C_t-C_0$.
We prefer the parametrisation by $t\in [0,\beta]$ here due to its more transparent interpretation as a change in inverse temperature.

To find a family $(\dot\lambda_t)_{t\in[0,\beta]}$, note that \eqref{e:Vt-Ising} implies
\begin{equation}
  \He V_t(\varphi) = C_t^{-1} - C_t^{-1}\Sigma_t(h+C_t^{-1}\varphi) C_t^{-1} 
\end{equation}
where
\begin{equation}
  \Sigma_t(f) = \pB{\E_{\mu_{t,f}}(\sigma_x\sigma_y)-\E_{\mu_{t,f}}(\sigma_x)\E_{\mu_{t,f}}(\sigma_y)}_{x,y},\qquad f \in \R^\Lambda.
\end{equation}
Since $\mu_{t,f}$ is a ferromagnetic Ising model (at inverse temperature $t$ and with external field $f\in\R^\Lambda$),
the FKG inequality implies that $\Sigma_t(f)$ has nonnegative entries, and
the recently proved powerful correlation inequality \cite[Corollary 1.3]{2107.09243}
further shows that $\Sigma_t(f)_{xy} \leq \Sigma_t(0)_{xy}$ for all $x,y\in\Lambda$.
Finally, by the Perron--Frobenius theorem, there exists $Y_0 = Y_0(f) \in \R^\Lambda$ with nonnegative entries and $\|Y_0\|_2=1$ such that
\begin{align}
  \|\Sigma_t(f)\| &= Y_0^T\Sigma_t(f)Y_0 \nnb
  &\leq Y_0^T\Sigma_t(0)Y_0 \leq \|\Sigma_t(0)\| 
  \leq \sup_{x\in\Lambda} \sum_{y\in\Lambda} \Sigma_t(0)_{xy}
  = \chi_t,
\end{align}
where $\|\Sigma\| = \sup_{\|Y\|_2\leq 1} \|\Sigma Y\|_2$  is the spectral radius of the symmetric matrix $\Sigma$,
and the last inequality follows from the Cauchy-Schwarz inequality.
Thus for all $X \in \R^\Lambda$, with $f=h+C_t^{-1}\varphi$,
\begin{equation}
  X^TC_t^{-1}\Sigma_t(h+C_t^{-1}\varphi)C_t^{-1}X \leq \|C_t^{-1}X\|_2^2 \|\Sigma_t(h+C_t^{-1}\varphi)\| \leq \chi_t X^TC_t^{-2}X,
\end{equation}
and we have the quadratic form inequality ($C_t,\dot C_t,\ddot C_t$, and $A$ all commute)
\begin{equation}
  \dot C_t\He V_t(\varphi)\dot C_t -\frac12 \ddot C_t\geq
  \dot C_t^2 C_t^{-1} - \chi_t \dot C_t^2 C_t^{-2} - (1-A) C_t \dot C_t
  = -(1-A) \dot C_t \chi_t \geq -\chi_t\dot C_t.
\end{equation}
Therefore by \eqref{e:criterion},
\begin{equation}
  \ent_{\nu_{0,\beta}}(G)
  \leq \frac{2}{\gamma} \E_{\nu_{0,\beta}}(\nabla \sqrt{G})^2\label{eq_LSI_with_G}
\end{equation}
with
\begin{equation} \label{e:gamma0beta}
  \frac{1}{\gamma}
  \leq \|\dot C_0\| \int_{0}^{\beta} e^{2\int_0^t\chi_s\, ds} \, dt
  = \frac{1}{\alpha^2} \int_{0}^{\beta} e^{2\int_0^t\chi_s\, ds} \, dt.
\end{equation}
To conclude, it only remains to bound $(\nabla_{\varphi}\sqrt{G})^2$ in terms of $F$.
This argument is exactly as in \cite{MR3926125}.
Indeed, by the definitions of $G$ (with $t=0$) and of $\mu_{0,h+\alpha\varphi}$,
\begin{equation}
\forall x\in\Lambda,\qquad  
(\nabla_{\varphi_x}\sqrt{G})^2 = \alpha^2 \frac{\cov_{\mu_{0,h+\alpha\varphi}}(F, \sigma_x)^2}{4(\E_{\mu_{0,h+\alpha\varphi}}F)}.\label{eq_nabla_G}
\end{equation}
Denote by $\mu_{0,h+\alpha\varphi}^x$ the conditional measure of the product measure $\mu_{0,h+\alpha\varphi}$ with all $\sigma_y$ with $y\neq x$ fixed.
By duplication, the covariance can be written as
\begin{align}
  \cov_{\mu^x_{0,h+\alpha\varphi}}(F, \sigma_x)^2
  &= \pa{ \frac12 \E_{\mu_{0,h+\alpha\varphi}^x \otimes \mu_{0,h+\alpha\varphi}^x} \qa{ (\sqrt{F}-\sqrt{F'}) (\sqrt{F}+\sqrt{F'})(\sigma_x-\sigma_x')}}^2
    \nnb
  &\leq \var_{\mu_{0,h+\alpha\varphi}^x}(\sqrt{F}) \frac12 \E_{\mu_{0,h+\alpha\varphi}^x \otimes \mu_{0,h+\alpha\varphi}^x} \qa{ (\sqrt{F}+\sqrt{F'})^2(\sigma_x-\sigma_x')^2}
    \nnb
  &\leq 8 \var_{\mu_{0,h+\alpha\varphi}^x}(\sqrt{F}) \E_{\mu_{0,h+\alpha\varphi}^x} (F),
\end{align}
where we have used the Cauchy-Schwarz inequality and that $|\sigma_x|\leq 1$.
Since $\mu_{0,h+\alpha\varphi}$ is a product measure, $\cov_{\mu_{0,h+\alpha\varphi}}(F,\sigma_x) = \E_{\mu_{0,h+\alpha\varphi}}\cov_{\mu_{0,h+\alpha\varphi}^x}(F,\sigma_x)$.
Thus, using the Cauchy-Schwarz inequality on the third line:
\begin{align}
\cov_{\mu_{0,h+\alpha\varphi}}(F,\sigma_x)^2 
&= 
\E_{\mu_{0,h+\alpha\varphi}}\big[\cov_{\mu_{0,h+\alpha\varphi}^x}(F,\sigma_x)\big]^2  \nnb
&\leq 
8\E_{\mu_{0,h+\alpha\varphi}}\Big[\sqrt{\var_{\mu_{0,h+\alpha\varphi}^x}(\sqrt{F})}\sqrt{\E_{\mu_{0,h+\alpha\varphi}^x} (F)}\Big]^2\nnb
&\leq
8\E_{\mu_{0,h+\alpha\varphi}}\Big[\var_{\mu_{0,h+\alpha\varphi}^x}(\sqrt{F})\Big]\E_{\mu_{0,h+\alpha\varphi}}(F).
\end{align}
Injecting the last bound into \eqref{eq_nabla_G} and using the elementary fact that the
biased Bernoulli $\pm 1$ measure $\mu_{0,h+\alpha\varphi}^x$ has spectral gap at least $2$ (with our normalisation of Dirichlet form),
\begin{align}
  \E_{\nu_{0,\beta}}(\nabla_{\varphi_x}\sqrt{G})^2
  &\leq 2\alpha^2   \E_{\nu_{0,\beta}} \E_{\mu_{0,h+\alpha\varphi}}\Big[\var_{\mu_{0,h+\alpha\varphi}^x}(\sqrt{F})\Big]
    \nnb
  &\leq 
  2\alpha^2
  \E_{\nu_{0,\beta}} \E_{\mu_{0,h+\alpha\varphi}}
  \frac14 \E_{\mu_{0,h+\alpha\varphi}^x} \Big[\big(\sqrt{F(\sigma)}-\sqrt{F(\sigma^x)}\big)^2\Big]
  .
\end{align}
Taking the sum over $x$ and using that $\E_{\mu_{0,h+\alpha\varphi}} \E_{\mu_{0,h+\alpha\varphi}^x} = \E_{\mu_{0,h+\alpha\varphi}}$, therefore
\begin{equation} \label{e:FGbd}
  \E_{\nu_{0,\beta}}(\nabla_{\varphi}\sqrt{G})^2
  \leq \alpha^2 D_{\mu_{\beta,h}}(\sqrt{F}).
\end{equation}
In summary, putting together the entropy decomposition \eqref{e:ent-decomp} and the bounds \eqref{eq_LSI_product} and \eqref{eq_LSI_with_G}--\eqref{e:FGbd} on each term of this decomposition:
\begin{equation}
  \ent_{\mu_{\beta,h}} (F)
  \leq (1+\frac{2\alpha^2}{\gamma}) D_{\mu_{\beta,h}}(\sqrt{F}).
\end{equation}
Substituting \eqref{e:gamma0beta} into this bound gives the result.
\end{proof}

\section*{Acknowledgements}

We thank T.\ Bodineau, J.\ Ding, and T.\ Helmuth for helpful discussions.
This work was supported by the European Research Council under the European Union's Horizon 2020 research and innovation programme
(grant agreement No.~851682 SPINRG). R.B.\ also acknowledges the hospitality of the Department of Mathematics at McGill University
where part of this work was carried out.

\bibliography{all}
\bibliographystyle{plain}
\end{document}